\definecolor{verylight}{gray}{0.97}
\definecolor{light}{gray}{0.9}
\definecolor{medium}{gray}{0.85}
\newtheorem{Theorem}{Theorem}[section]
\newtheorem{Corollary}[Theorem]{Corollary}
\newtheorem{Proposition}[Theorem]{Proposition}
\newtheorem{Remark}[Theorem]{Remark}
\newtheorem{Example}[Theorem]{Example}
\newtheorem{Definition}[Theorem]{Definition}
\def\cocoa{{\hbox{\rm C\kern-.13em o\kern-.07em C\kern-.13em o\kern-.15em A}}}
\def\implies{\ifmmode\Rightarrow \else
        \unskip${}\Rightarrow{}$\ignorespaces\fi}
\begin{document}

    \title[Cohen--Macaulay Hybrid Graphs]{Cohen--Macaulay Hybrid Graphs}
    \author[Safyan, Imran \& Fazal]{Safyan Ahmad, Imran Anwar, Fazal Abbas}

        \address{Safyan Ahmad, Abdus Salam School of Mathematical Sciences, GC University Lahore, Pakistan.
} \email{safyank@gmail.com}

\address{Imran Anwar, Abdus Salam School of Mathematical Sciences, GC University Lahore, Pakistan.
} \email{imrananwar@sms.edu.pk}

\address{Fazal Abbas, Abdus Salam School of Mathematical Sciences, GC University Lahore, Pakistan.
} \email{fazalabbas307@yahoo.com}

 \subjclass[2010]{13A02, 13C14, 13D02, 13P10}

    \keywords{Cohen--Macaulay Graphs, Unmixed Graphs, Chordal Graphs, Clique Complex,  Shellable Simplicial Complexes}

\begin{abstract}
We introduce a new family of graphs, namely, {\em hybrid graphs}.
There are infinitely many hybrid graphs associated to a single
graph. We show that every hybrid graph associated a given graph is
Cohen--Macaulay. Furthermore, we show that every Cohen--Macaulay
chordal graph is a hybrid graph.
\end{abstract}

\maketitle

\section*{introduction}
Let $G$ be the simple graph on the vertex set $[n]$ and the edge set $E(G)$. We identify the vertex $i$ with the variable $x_i$ and consider the polynomial ring $S=K[x_1, \ldots, x_n]$ over an arbitrary field $K$. Let $I(G)=(x_ix_j: \{i,j\} \in E(G)) \subset S$ denotes the edge ideal of $G$. We say that the graph $G$ is Cohen--Macaulay if $S \slash I(G)$ is Cohen--Macaulay. Using the Stanley--Reisner correspondence, one can associate to $G$ the simplicial complex $\Delta_G$, whose faces are the independent subsets of $[n]$ and whose Stanley--Reisner ideal coincides with the edge ideal of $G$, i.e. $I_{\Delta_G} = I(G)$.\\
According to \cite{HHZ}, it is unlikely to have a general classification of Cohen--Macaulay graphs. Therefore, it is natural to study this question for some special classes of graphs. For instance, Villarreal in \cite{V} gave classification of all Cohen--Macaulay trees. Herzog, Hibi and Zheng \cite{HHZ} classified all Cohen--Macaulay chordal graphs later  Herzog and Hibi \cite{HH} did for all Cohen--Macaulay bipartite graphs.\\
The potential aim of this paper is to generate new graphs $G'$ from a given graph $G$ with the property that $G'$ is always Cohen--Macaulay. The classification and construction of Cohen--Macaulay graphs is one of the central problems and  enjoys rich literature, for instance  \cite{CN}, \cite{DE}, \cite{EV}, \cite{FA}, \cite{VTW} and \cite{W}. Here, we generalizes the notion of "whiskering", introduced by Villareal\cite{V}. By a whisker to a vertex $x$, we mean to add a new vertex $y \not\in [n]$ and an edge $\{x,y\}$. Villareal proved that if we add whiskers to every vertex of any graph $G$, the resulting graph $G'$ is always Cohen--Macaulay. Cook and Nagel \cite{CN}, extends this work and defined clique-whiskering. They proved that $G'$ obtained by clique-whiskering is always Cohen--Macaulay. Recently, J. Biermann and A. V. Tuyl \cite{BV} defined $s-$coloring $\chi$ on a simplicial complex $\Delta$ to construct a Cohen--Macaulay simplicial complex $\Delta_{\chi}$, generalizing the previous constructions for simplicial complexes. More recent, A. M. Liu and T. Wu \cite{MT} generalized this notion of clique-whiskering for obtaining families of sequentially Cohen--Macaulay graphs.\\
In this paper, we give a construction of hybrid graphs
(see~\ref{mainsec}).  The well--known constructions mentioned in
\cite{CN} and \cite{V}  appear to be particular cases of our
construction. Moreover, by using the construction discussed in
\cite{V}, one may obtain one Cohen-Macualy Graph corresponding to a
given simple graph $G$. Similarly, by using the construction,
corresponding to each clique partition of vertex set, one may obtain
one Cohen--Macaulay graph. But in our case, corresponding to any
simple graph $G$ and a given clique partition of the vertex set of
$G$, one may obtain infinitely many Cohen--Macaulay graphs.  We call
these new graphs as the hybrid graphs associated to $G$, here is our
main result.
\begin{Theorem}[Theorem~\ref{thm-cohen-macaulay-graphs}]
Let $G$ be a graph on the vertex set $[n]$ and $A_1, \ldots, A_r$ be
a partition of $[n]$ into disjoint subsets such that $A_i$ is a
clique in $G$. Let $B_i=\{y_{i,1}, \ldots, y_{i,{s_i}} \}$ and
$G'=G_{{A_1}, \ldots, {A_r}}^{{B_1}, \ldots, {B_r}}$ be a hybrid
graph of $G$ with respect to $B_1, \ldots, B_r$. Let $\Delta'$ be
the independence complex of $G'$ and $S=\mathbb{K}[\{x_i \colon \;
i=1, \ldots, n\} \cup \{y_{i,j} \colon \; i=1, \ldots, r, \; j=1,
\ldots, s_i\}]$ be the polynomial ring. Then,
\begin{itemize}
\item[\rm (i)] $\Delta'$ is pure shellable of dimension $r-1$.
\item[\rm (ii)] The ring $\frac{S}{I \left( G' \right)}$ is Cohen-Macaulay of dimension $r$.
\end{itemize}
\end{Theorem}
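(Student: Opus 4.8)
The plan is to obtain (ii) from (i) for free: a pure shellable simplicial complex is Cohen--Macaulay over any field, and $\dim S/I_{\Delta'} = \dim \Delta' + 1$ for any complex; since $I(G') = I_{\Delta'}$, part (i) immediately gives that $S/I(G')$ is Cohen--Macaulay of Krull dimension $r$. So everything is in (i), which has a ``purity'' half and a ``shellability'' half.

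\emph{Facets and purity.} Put $C_i := A_i \cup B_i$ for $i = 1,\ldots,r$. The features of the hybrid graph that the proof uses are: $G'$ restricted to $[n]$ equals $G$; each new vertex $y_{i,j}$ has no neighbour of $G'$ outside $C_i$; and each $C_i$ induces a clique of $G'$. From these, an independent set $\sigma$ of $G'$ satisfies $|\sigma \cap C_i| \le 1$ for all $i$, and whenever $\sigma \cap C_i = \emptyset$ the set $\sigma \cup \{y_{i,1}\}$ is still independent. Hence $\sigma$ is a facet of $\Delta'$ exactly when $|\sigma \cap C_i| = 1$ for every $i$ and $\sigma \cap [n]$ is independent in $G$; in particular every maximal independent set of $G'$ meets each $C_i$ in a single vertex and so has exactly $r$ elements. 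Thus $\Delta'$ is pure of dimension $r-1$, i.e. $G'$ is unmixed.

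\emph{Shellability.} The cleanest route is to prove, by induction on $|V(G')|$, that $\Delta'$ is vertex decomposable; together with purity this gives pure shellability. The claim is that \emph{any} original vertex $x$, say $x \in A_1$, is a shedding vertex. One has $\Delta' \setminus x = \Delta_{G' \setminus x}$ and $\mathrm{lk}_{\Delta'}(x) = \Delta_{G' \setminus N_{G'}[x]}$; since $N_{G'}[x] = C_1 \cup N_G(x)$, deleting $x$ yields a hybrid graph on $G \setminus x$ with $A_1$ shrunk by one vertex, while deleting $N_{G'}[x]$ yields a hybrid graph with one fewer clique (on $G$ with $A_1$ and $N_G(x)$ removed). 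In either case a cluster whose $A$-part has become empty survives as an isolated clique of whisker vertices, so both complexes are joins of a smaller hybrid independence complex with finitely many $0$-dimensional complexes, hence vertex decomposable by induction. Finally, every facet of $\Delta' \setminus x$ is a facet of $\Delta'$: a maximal independent set $F$ of $G' \setminus x$ must meet the surviving clique on $C_1 \setminus \{x\}$ (otherwise one of its vertices could be added to $F$), and every such vertex is $G'$-adjacent to $x$, so $F$ admits no extension by $x$. This checks the shedding conditions and completes (i).

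\textbf{The main obstacle.} The only genuinely non-routine point is the choice of shedding vertex: a whisker $y_{i,j}$ does not work in general, since deleting it can create maximal independent sets of size $r-1$ --- it is precisely the edges of $G$ among the $A_i$'s that make this subtle --- and the induction closes only after one commits to shedding an original vertex and then carefully tracks the degenerate (emptied) clusters. An alternative to the vertex-decomposability argument is to exhibit an explicit shelling, ordering the facets so that those using more original vertices come first (ties broken lexicographically) and verifying the shelling condition by swapping, in a well-chosen cluster of disagreement, the relevant vertex of $F_k$; the same interaction with the edges of $G$ is what must be controlled there. One can also observe that $G'$ is the clique-whiskering of the graph obtained from $G$ by making each $A_i \cup \{y_{i,2},\ldots,y_{i,s_i}\}$ a clique, so that (i) falls out of the Biermann--Van Tuyl construction as well.
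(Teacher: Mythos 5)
Your proposal is correct, but it proves shellability by a genuinely different route than the paper. The paper constructs an explicit shelling order: it groups the facets of $\Delta'$ into blocks indexed by the faces $F$ of the original independence complex $\Delta$ (each facet being $F\cup F'$ with $F'$ a choice of one whisker vertex per cluster missed by $F$), orders the blocks by increasing $|F|$ and breaks ties by the fixed variable order, and then verifies the shelling condition by a two-case analysis (facets in different blocks versus the same block), exchanging one vertex at a time. You instead prove the stronger statement that $\Delta'$ is (pure) vertex decomposable, by induction on the number of original vertices: you shed an original vertex $x\in A_1$, observe that both the deletion and the link are again independence complexes of hybrid graphs (with $A_1$ shrunk, respectively with the cluster of $x$ and $N_G(x)$ removed, empty $A_i$'s being allowed by the definition), and check the shedding condition exactly where it matters --- your remark that a whisker vertex $y_{i,j}$ generally fails as a shedding vertex is accurate and is the real content of the choice. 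Your argument buys a stronger conclusion (vertex decomposability) with a shorter verification, at the cost of not exhibiting a concrete shelling order; the paper's explicit order is more hands-on and could be reused, e.g., to read off restriction sets and the $h$-vector. Your closing observation that $G'$ is the Cook--Nagel clique-whiskering of the auxiliary graph in which each $A_i\cup\{y_{i,2},\dots,y_{i,s_i}\}$ is completed to a clique is also correct (modulo the degenerate case $A_i=\emptyset$, $s_i=1$, where that part is an empty clique and $y_{i,1}$ is an isolated cone vertex, handled separately); it gives a one-line reduction of the theorem to Theorem 3.3 of Cook--Nagel, which the paper does not note.
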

As a quick consequence of the above theorem, we obtain following
well-known results.
\begin{Corollary}[Villareal]
\label{whisker} Suppose $G$ is a graph and let $G'$ be the graph
obtained by adding a whisker at every vertex $v \in G$. Then the
ideal $I(G')$ is Cohen--Macaulay.
\end{Corollary}
\begin{Corollary}[Cook and Nagel]
Let $\pi=\{W_1, \ldots, W_t\}$ be a clique vertex partition of $G$.
Then $I(G^{\pi})$ is Cohen--Macaulay.
\end{Corollary}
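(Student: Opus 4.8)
The plan is to establish (i) directly by exhibiting a shelling of $\Delta'$, and then to read off (ii) formally. Indeed $I(G')$ is the Stanley--Reisner ideal of $\Delta'$, a pure shellable complex is Cohen--Macaulay over every field, and $\dim S/I(G')=\dim\Delta'+1$; so (ii) follows from (i) the moment we know that $\Delta'$ is pure of dimension $r-1$. Everything therefore reduces to shelling $\Delta'$ with all facets of size $r$.

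First I would describe the facets of $\Delta'$, i.e.\ the maximal independent sets of $G'$. The two features of the construction that do the work are that each $A_i\cup B_i$ induces a clique in $G'$ and that every new vertex in $B_i$ has all of its neighbours inside $A_i\cup B_i$. The first forces an independent set to meet $A_i\cup B_i$ in at most one vertex; the second says that a maximal independent set cannot avoid $A_i\cup B_i$ altogether, since otherwise $y_{i,1}$ could be adjoined. Hence every facet is a transversal $F=\{z_1,\dots,z_r\}$ with $z_i\in A_i\cup B_i$, so in particular $\dim\Delta'=r-1$ and $\Delta'$ is pure; conversely such an $F$ is a facet exactly when $\{z_i : z_i\in[n]\}$ is independent in $G$, this being the only obstruction because the only edges of $G'$ joining different blocks are the edges of $G$ among the $A$-vertices. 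Both implications are immediate once stated.

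Next I would fix on each $A_i\cup B_i$ a total order in which the new vertices come first, $y_{i,1}<\dots<y_{i,s_i}<(\text{the vertices of }A_i)$, and order the facets $F=(z_1,\dots,z_r)$ lexicographically as $r$-tuples. To see that this is a shelling it is enough to check that whenever $F_i$ precedes $F_j$ there is a facet $F_k$ preceding $F_j$ with $F_i\cap F_j\subseteq F_k\cap F_j$ and $|F_k\cap F_j|=r-1$. Let $\ell$ be the first coordinate in which $F_i$ and $F_j$ differ; then the $\ell$-th entry $z_\ell$ of $F_j$ is strictly above the minimum $y_{\ell,1}$ of its block. Let $F_k$ be obtained from $F_j$ by replacing $z_\ell$ with $y_{\ell,1}$. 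Then $F_k$ is again a facet --- its set of $[n]$-vertices is contained in that of $F_j$, hence independent in $G$ --- it precedes $F_j$ since the two first differ in coordinate $\ell$ where $y_{\ell,1}<z_\ell$, and $F_k\cap F_j=F_j\setminus\{z_\ell\}$ has $r-1$ elements. Finally $F_i$ and $F_j$ agree on coordinates $1,\dots,\ell-1$ and their blocks disagree at $\ell$, so $F_i\cap F_j\subseteq F_j\setminus\{z_\ell\}=F_k\cap F_j$. This verifies the shelling condition and proves (i); then (ii) follows as above.

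I do not expect a serious obstacle along this route; the points that require care are the precise description of the facets and the choice of orientation in the order --- putting the vertices of $B_i$ below those of $A_i$ --- because it is exactly that choice which makes ``replace the first disagreeing coordinate by the smallest whisker'' land on an earlier facet and reduce the shelling condition to a one-line check.
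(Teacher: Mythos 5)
Your argument is mathematically correct, but it is not a proof of the statement as posed: you never mention $\pi$, the cliques $W_i$, or the graph $G^{\pi}$, and instead re-prove the general hybrid-graph theorem (Theorem~\ref{thm-cohen-macaulay-graphs}) about $G_{A_1,\ldots,A_r}^{B_1,\ldots,B_r}$. The paper's proof of this corollary consists of exactly the step you skip: take $A_i=W_i$ and $B_i=\{y_i\}$ a singleton, note that the fully clique-whiskered graph $G^{\pi}$ is then literally the hybrid graph $G_{W_1,\ldots,W_t}^{B_1,\ldots,B_t}$, and quote the theorem. Once that one-sentence identification is supplied, your write-up becomes a complete and in fact self-contained proof, and your shelling is genuinely different from, and tidier than, the paper's: you order each block $A_i\cup B_i$ with the new vertices first and order the facets, viewed as transversals $(z_1,\ldots,z_r)$, lexicographically, so that the single exchange ``replace the first disagreeing coordinate of $F_j$ by the minimal element $y_{\ell,1}$ of its block'' verifies the shelling condition in one stroke. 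The paper instead groups the facets into blocks indexed by the faces of $\Delta$, orders these blocks by dimension, and needs a two-case analysis whose second case is an iterated exchange argument. Your description of the facets as independent transversals, and the resulting purity and dimension count, agree with Proposition~\ref{shape of facets}; what your route buys is a uniform one-step verification of shellability, while the paper's route buys a two-line proof of this particular corollary by specialization.
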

Finally, we give a complete characterization all Cohen--Macaulay
chordal graphs by adding one more equivalent condition to a well
known characterization of Cohen--Macaulay chordal graphs due to
Herzog, Hibi and Zehng \cite{HHZ},
\begin{Corollary}
Let $K$ be a field and $G$ be a chordal graph  on the vertex set
$[n]$. Let $F_1, \ldots, F_m$ be the facets of $\Delta(G)$ which
admit a free vertex. Then the following are equivalent:
\begin{enumerate}
  \item $G$ is Cohen--Macaulay;
  \item $G$ is Cohen--Macaulay over $K$;
  \item $G$ is unmixed;
  \item $[n]$ is the disjoint union of $F_1, \ldots, F_m$.
  \item $G$ is a hybrid graph.
\end{enumerate}
\end{Corollary}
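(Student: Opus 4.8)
The plan is to keep the equivalence of (1), (2), (3) and (4) as given — this is the theorem of Herzog, Hibi and Zheng \cite{HHZ} — and to splice condition (5) into that chain by proving the two implications (4) $\Rightarrow$ (5) and (5) $\Rightarrow$ (1); together with the known equivalences this yields the cycle $(1)\Leftrightarrow(2)\Leftrightarrow(3)\Leftrightarrow(4)\Rightarrow(5)\Rightarrow(1)$.

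The implication (5) $\Rightarrow$ (1) is immediate: if $G$ is a hybrid graph, say $G = H_{A_1,\ldots,A_r}^{B_1,\ldots,B_r}$ for a graph $H$, a clique partition $A_1,\ldots,A_r$ of its vertex set, and sets $B_1,\ldots,B_r$, then Theorem~\ref{thm-cohen-macaulay-graphs}(ii) applied to this data says exactly that $S/I(G)$ is Cohen--Macaulay.

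The substantive direction is (4) $\Rightarrow$ (5). Assume $[n] = F_1 \sqcup \cdots \sqcup F_m$, where $F_1, \ldots, F_m$ are exactly the facets of $\Delta(G)$ that admit a free vertex; each $F_i$ is a maximal clique of $G$. I would split each facet as $F_i = A_i \sqcup B_i$, where $B_i$ is the set of free vertices of $F_i$ and $A_i = F_i \setminus B_i$ the set of non-free ones, and take $H$ to be the induced subgraph of $G$ on $A := A_1 \cup \cdots \cup A_m$. Each $B_i$ is nonempty (since $F_i$ has a free vertex), each $A_i$ is a clique (since $A_i \subseteq F_i$), and the $A_i$ are pairwise disjoint (since the $F_i$ are); hence $\{A_1, \ldots, A_m\}$ is a clique partition of the vertex set of $H$, and the $B_i$ serve, up to relabelling, as the sets of new vertices adjoined to the cliques $A_i$, with $s_i = |B_i|$. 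What then remains is to verify that $G$ coincides with the hybrid graph $H_{A_1,\ldots,A_m}^{B_1,\ldots,B_m}$, i.e.\ that the edges of $G$ are precisely the edges of $H$ together with all pairs lying inside one of the cliques $A_i \cup B_i = F_i$.

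This edge identification is the step I expect to be the main obstacle, and it is where hypothesis (4) genuinely enters. One inclusion is clear. For the converse I would take an edge $\{u,v\}$ of $G$, extend it to a maximal clique $F$, and distinguish two cases. If $F$ admits a free vertex, then $F = F_i$ for some $i$ and $\{u,v\} \subseteq A_i \cup B_i$. If $F$ has no free vertex, I would use the elementary observation that a vertex $w \in F_i$ is free if and only if its closed neighborhood equals $F_i$: since by (4) $u$ lies in a unique facet $F_j$ and $u$ also lies in the distinct maximal clique $F$, the vertex $u$ is non-free and hence lies in $A_j$, and likewise $v \in A_{j'}$, so $\{u,v\}$ is an edge of $H$. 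This exhausts the cases and shows $G$ is a hybrid graph, closing the cycle. (A degenerate situation, some $A_i$ empty, occurs precisely when a connected component of $G$ is itself a complete graph; it is harmless and is handled by treating such components separately, attaching the corresponding $B_i$ to a trivial clique.)
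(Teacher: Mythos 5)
Your proposal is correct and follows essentially the same route as the paper: the equivalence of (1)--(4) is quoted from Herzog--Hibi--Zheng, (5) $\Rightarrow$ (1) is immediate from Theorem~\ref{thm-cohen-macaulay-graphs}, and (4) $\Rightarrow$ (5) is obtained by splitting each facet $F_i$ into its non-free vertices $A_i$ and free vertices $B_i$ and taking $H$ to be the induced subgraph on $\bigcup_i A_i$, exactly as in the paper. Your verification that every edge of $G$ lies either inside some $F_i$ or inside $H$ (via the observation that a vertex lying in two distinct facets cannot be free) is a detail the paper leaves implicit, and it is carried out correctly.
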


\section{Preliminaries} \label{prelim}
In this section, we recall some necessary definitions and results.

\subsection{Simplicial complexes}
A {\em simplicial complex} $\Delta$ on the vertex set $V=\{v_1,
\ldots, v_n\}$ is a collection of subsets of $V$ such that $\{ v_i
\} \in \Delta$  for all $i$ and, $F \in \Delta$ implies that all
subsets of $F$ are also in $\Delta$. The elements of $\Delta$ are
called {\em faces} and the maximal faces under inclusion are called
{\em facets} of $\Delta$. We denote by $\mathcal{F}(\Delta)$ the set
of facets of $\Delta$. We say that a simplicial complex is {\em
pure} if all its facets have the same cardinality.
 The {\em dimension} of a face $F$ is $\dim F = |F|-1$, where $|F|$ denotes the cardinality of $F$. A simplicial complex is called {\em pure} if all its facets have the same dimension.
The \textit{dimension} of $\Delta$, $\dim(\Delta)$, is defined as:
$$\dim (\Delta) = \max\{\dim F \colon F \in \Delta \}.$$
Given a simplicial complex $\Delta$ on the vertex set $\{v_{1},
\ldots, v_{n}\}$. For $F \subseteq \{v_{1}, \ldots, v_{n} \}$ let
$\textbf{x}_F=\prod_{v_i \in F}{x_i}$, and let
$\textbf{x}_\varnothing = 1$. The \emph{non-face ideal} or the
\emph{Stanley-Reisner ideal} of $\Delta$, denoted by $I_\Delta$, is
an ideal of $S$ generated by square-free monomials $\textbf{x}_F$,
where $F \not\in \Delta$.

\begin{Definition}[Shellable simplicial complexes] \label{definition of shellability}
A simplicial complex $\Delta$ is called {\em shellable} if there is
a total order of the facets of  $\Delta$, say $F_1, \ldots, F_t$,
such that $\langle F_1, \ldots, F_{i-1} \rangle \cap \langle F_i
\rangle$ is generated by a non-empty set of maximal proper faces of
$F_i$ for $2 \leq i \leq t$. Any such order is called a {\em
shelling order} of $\Delta$.
\end{Definition}

To say $F_1, F_2, \ldots, F_t$ is a shelling order of $\Delta$; it
is equivalent to saying
that for all $F_i$ and all $F_j < F_i$, there exists $x \in F_i \setminus F_j$ and $F_k < F_i$ such that $F_i \setminus F_k = \{x\}$.\\
The class of shellable simplicial complexes is important due to the
following result.
\begin{Theorem}\label{shellable-is-cm}[Theorem 8.2.6, \cite{HHBook}]
A pure shellable simplicial complex is Cohen--Macaulay over any
arbitrary field.
\end{Theorem}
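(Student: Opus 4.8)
The plan is to deduce the Cohen--Macaulay property from \emph{Reisner's criterion}, the standard homological characterization of Cohen--Macaulayness for Stanley--Reisner rings: the ring $K[\Delta]$ is Cohen--Macaulay over $K$ if and only if for every face $F \in \Delta$ (including $F = \varnothing$) one has $\tilde{H}_i(\mathrm{lk}_\Delta F; K) = 0$ for all $i < \dim \mathrm{lk}_\Delta F$. Granting this criterion, it suffices to verify the homological vanishing condition for every link of a pure shellable complex, and the argument then breaks into two independent combinatorial facts.

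First I would show that purity and shellability pass to links. If $F_1, \ldots, F_t$ is a shelling of $\Delta$ and $F$ is a face, then collecting exactly those facets that contain $F$, deleting $F$ from each, and keeping the induced relative order produces a shelling of $\mathrm{lk}_\Delta F$; purity of the link is immediate since $\Delta$ is pure and every facet through $F$ has the same cardinality. The only work here is checking that the intersection condition of Definition~\ref{definition of shellability} survives passage to the link, which is formal bookkeeping.

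Second, and this is the heart of the matter, I would prove that a pure shellable complex $\Gamma$ of dimension $d$ satisfies $\tilde{H}_i(\Gamma; K) = 0$ for all $i < d$, by induction on the number of facets along the shelling order. Writing $\Gamma_{j} = \langle F_1, \ldots, F_j \rangle$, the shelling condition forces the intersection $C_j := \Gamma_{j-1} \cap \langle F_j \rangle$ to be generated by maximal proper faces of the $d$-simplex $F_j$, hence to be a pure $(d-1)$-dimensional subcomplex of the boundary sphere $\partial F_j \simeq S^{d-1}$. There is then a dichotomy: either $C_j$ is all of $\partial F_j$, or $C_j$ is a proper subcomplex. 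In the proper case one checks that $C_j$ is actually a cone (any vertex of $F_j$ lying outside the minimal new face serves as an apex), hence contractible; feeding $\Gamma_j = \Gamma_{j-1} \cup \langle F_j \rangle$ into the reduced Mayer--Vietoris sequence, with both $\langle F_j \rangle$ and $C_j$ contractible, yields $\tilde H_i(\Gamma_j; K) \cong \tilde H_i(\Gamma_{j-1}; K)$ for all $i$. In the case $C_j = \partial F_j$ the same sequence shows $\Gamma_j$ arises from $\Gamma_{j-1}$ by wedging on a single $d$-sphere, affecting only $\tilde H_d$. Since $\Gamma_1$ is a simplex with vanishing reduced homology, the vanishing in degrees $< d$ survives every step.

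Combining the two facts completes the proof: by the first, every link $\mathrm{lk}_\Delta F$ of a pure shellable $\Delta$ is again pure and shellable, so by the second its reduced homology vanishes below its top dimension, which is precisely what Reisner's criterion demands. Hence $K[\Delta]$ is Cohen--Macaulay over $K$, and since $K$ was arbitrary the conclusion holds over any field. The main obstacle is the homological induction of the third paragraph: one must control the Mayer--Vietoris connecting maps carefully enough to exclude the creation of \emph{any} sub-top-dimensional homology as each facet is glued on, which is exactly where the full strength of the shelling hypothesis --- that $C_j$ is \emph{pure} of codimension one, and contractible whenever it is proper --- is indispensable.
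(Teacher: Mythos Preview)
The paper does not actually prove this theorem: it is quoted in the Preliminaries section as a background result and attributed to Theorem~8.2.6 of \cite{HHBook}, with no argument supplied. There is therefore no ``paper's own proof'' to compare against.

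Your outline is a correct and standard route to the result. Reisner's criterion reduces the problem to showing that every link has vanishing reduced homology below its top dimension; the fact that links of a pure shellable complex are again pure shellable is exactly as you describe (restrict the shelling to facets containing the face and delete that face); and the Mayer--Vietoris induction along the shelling is the classical argument. Your dichotomy for the attaching complex $C_j$ is accurate: when $C_j \subsetneq \partial F_j$ it is a cone over any vertex of $F_j$ not in the restriction face $R(F_j)$, and when $C_j = \partial F_j$ only top-degree homology changes. One small remark: the phrase ``the minimal new face'' presupposes the existence and uniqueness of the restriction $R(F_j)$, which follows from the shelling axiom but is worth stating explicitly since it is what pins down the cone apex.

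For context, the proof in \cite{HHBook} proceeds somewhat differently, working more algebraically (via partitionability of $\Delta$ and the resulting clean Stanley decomposition of $K[\Delta]$, from which Cohen--Macaulayness is read off directly) rather than through Reisner's criterion and topological homology computations. Your approach is more topological in flavour; both are classical and neither is more elementary than the other.
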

\subsection*{Graphs}
Throughout in this paper, $G$ will denote a simple graph on $[n]$ vertices which means $G$ has no loops or multiple edges. Given a subset $W$ of $[n]$, we define the {\em induced subgraph} of $G$ on $W$ to be the subgraph $G_W$ on $W$ consisting of those edges $\{i,j\} \in E(G)$ with $\{i,j\} \subset W$. A {\em walk of length $m$} in $G$ is a sequence of vertices $\{i_0, \ldots, i_m\}$ such that $\{i_{j-1}, i_j\}$ are edges in $G$ . A {\em cycle of length $m$} is a closed walk $\{i_0, \ldots, i_m\}$ in which $n \geq 3$ and the vertices $i_1, \ldots, i_m$ are distinct. A graph $G$ on $[n]$ is {\em connected} if, for any two vertices $i$ and $j$, there is a walk between $i$ and $j$. A connected graph without cycles is said to be a {\em tree}. The {\em complete graph} $K_m$ has every pair of its $m$ vertices adjacent.\\
A {\em chord} of a cycle $C$ is an edge $\{i,j\}$ of $G$ such that
$i$ and $j$ are vertices of $C$ with $\{i,j\} \not\in E(C)$. A graph
is said to be {\em chordal} graph if each of its cycles of length
$>3$ has a chord, obviously every tree is a chordal graph. A subset
$C$ of $V(G)$ is called a {\em clique} of $G$ if induced subgraph
$G_C$ is complete. The {\em clique complex} $\Delta(G)$ of a finite
graph $G$  on $V(G)$ is a simplicial complex whose faces are the
cliques of $G$. For a finite simple graph $G$ on $n$ vertices, one
may associate a square-free monomial ideal $I(G)$ in $S =K[x_1,
\ldots, x_n]$, namely, \em edge ideal \em of $G$ defined as,
\[
I(G)=(x_ix_j: \{i,j\} \in E(G))
\]

We say that $G$ is  Cohen--Macaulay over the field $K$, if the
associated quotient ring $S \slash I(G)$ is Cohen--Macaulay. A
subset $C \in V(G)$ is called a {\em vertex cover} of $G$ if $C\cap
E\neq\emptyset$ for all $E\in E(G)$. A vertex cover $C$ is {\em
minimal} if no proper subset of $C$ is a vertex cover of $G$ and if
all minimal vertex covers of $G$ have same cardinality, then we say
that $G$ is {\em unmixed}. All Cohen--Macaulay graphs are unmixed
but not vice versa. The following result characterizes all
Cohen--Macaulay chordal graphs.
\begin{Theorem}\cite[Theorem 2.1]{HHZ}
\label{HHZ} Let $K$ be a field and $G$ be a chordal graph  on the
vertex set $[n]$. Let $F_1, \ldots, F_m$ be the facets of
$\Delta(G)$ which admit a free vertex. Then the following are
equivalent:
\begin{enumerate}
  \item $G$ is Cohen--Macaulay;
  \item $G$ is Cohen--Macaulay over $K$;
  \item $G$ is unmixed;
  \item $[n]$ is the disjoint union of $F_1, \ldots, F_m$.
\end{enumerate}
\end{Theorem}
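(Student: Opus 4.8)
The plan is to establish the equivalences by closing the cycle $(2)\Rightarrow(3)\Rightarrow(4)\Rightarrow(2)$ and reading off $(1)$ as a by-product. Two of the arrows are soft. The implication $(2)\Rightarrow(3)$ is the general principle already recorded in the Preliminaries that a Cohen--Macaulay graph is unmixed: the complements of the facets of the independence complex $\Delta_G$ are exactly the minimal vertex covers of $G$, so ``$G$ unmixed'' means ``$\Delta_G$ pure,'' and a Cohen--Macaulay Stanley--Reisner ring has a pure complex; the same fact gives $(1)\Rightarrow(3)$ for whichever field is intended in $(1)$. So it suffices to prove $(3)\Rightarrow(4)$ and then $(4)\Rightarrow$ (Cohen--Macaulayness over \emph{every} field): the latter yields $(4)\Rightarrow(1)$ and $(4)\Rightarrow(2)$ at once, and shows a posteriori that Cohen--Macaulayness of a chordal graph is field independent, which is the only content in separating $(1)$ from $(2)$.

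For $(4)\Rightarrow$ Cohen--Macaulay, write $[n]=F_1\sqcup\cdots\sqcup F_m$ with each $F_i$ a maximal clique carrying a free vertex $v_i$, so that $N[v_i]=F_i$. First one checks that $\Delta_G$ is pure of dimension $m-1$: a maximal independent set $M$ meets each $F_i$ in at most one vertex (it is a clique) and in at least one vertex (otherwise $M\cup\{v_i\}$ is again independent, as $v_i$ is adjacent only to $F_i\setminus\{v_i\}$), hence $|M|=m$. Then one exhibits a shelling of $\Delta_G$ by ordering its facets --- the independent transversals of $F_1,\dots,F_m$ --- so that a facet using more of the distinguished vertices $v_i$ comes earlier, and verifies the restriction condition of Definition~\ref{definition of shellability} facet by facet; this is precisely the shelling argument used to prove Theorem~\ref{thm-cohen-macaulay-graphs} (a graph satisfying $(4)$ is a hybrid graph of the induced subgraph on $[n]\setminus\{v_1,\dots,v_m\}$). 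By Theorem~\ref{shellable-is-cm} a pure shellable complex is Cohen--Macaulay over any field, which is what is wanted.

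The real work is $(3)\Rightarrow(4)$, which I would do by induction on $n$, the edgeless case being immediate. If $G$ has an edge, pick a free vertex $v$ (chordal graphs have simplicial vertices), put $F=N[v]$ (a maximal clique) and $H=G\setminus F$ (again chordal). The first step is that $H$ is unmixed: each maximal independent set $W$ of $H$ gives a maximal independent set $W\cup\{v\}$ of $G$ --- nothing in $V(H)$ is adjacent to $v$, and any vertex addable to $W\cup\{v\}$ would lie in $V(H)$ and be addable to $W$ --- so, $G$ being unmixed with common maximal-independent-set size $d$, every maximal independent set of $H$ has size $d-1$. By induction $[n]\setminus F=G_1\sqcup\cdots\sqcup G_{d-1}$, each $G_j$ a facet of $\Delta(H)$ owning a free vertex $w_j$ of $H$. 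The second, crucial step is to upgrade this to $G$: one must show each $G_j$ is still a maximal clique of $G$ with $w_j$ free in $G$, and that $\Delta(G)$ has no further facet with a free vertex, so that $\{F,G_1,\dots,G_{d-1}\}$ is exactly the family $\{F_1,\dots,F_m\}$ and partitions $[n]$. The only possible obstruction is a vertex $u\in F$ adjacent to all of some $G_j$ (producing a larger clique $G_j\cup\{u\}$, and a free-vertex facet meeting $F$); one rules this out by using unmixedness of $G$ itself --- such a $u$ lets one rearrange the transversal $\{v,w_1,\dots,w_{d-1}\}$ into a maximal independent set of size $\neq d$. This last bookkeeping --- tracking how the maximal cliques of a chordal graph, and which of them own a free vertex, change when one deletes the closed neighbourhood of a simplicial vertex, and extracting the contradiction from unmixedness of $G$ rather than merely of $H$ --- is the main obstacle; purity, the explicit shelling, $(2)\Rightarrow(3)$, and the base of the induction are all routine.
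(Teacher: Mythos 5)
This theorem is not proved in the paper at all: it is quoted verbatim from \cite{HHZ} as background, so there is no in-paper proof of the equivalence $(1)$--$(4)$ to compare against. The only fragment the paper does reprove is the direction $(4)\Rightarrow(1)$, in Corollary~\ref{HHZcor}, by observing that a graph satisfying $(4)$ is the hybrid graph of the induced subgraph on the non-free vertices and invoking the shelling of Theorem~\ref{thm-cohen-macaulay-graphs}. Your second paragraph (purity of $\Delta_G$ and the shelling of its facets) is exactly that argument, and it is in fact a genuinely different route from the one in \cite{HHZ} itself, where $(4)\Rightarrow(2)$ is obtained algebraically via Alexander duality and linear quotients. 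Your treatment of $(2)\Rightarrow(3)$ and of purity under $(4)$ is correct and routine, and the overall logical cycle you set up is sound.

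The gap is in $(3)\Rightarrow(4)$, which is the only hard arrow and which you leave as a sketch. The specific flaw is the claim that ``the only possible obstruction is a vertex $u\in F$ adjacent to all of some $G_j$.'' That case is indeed easy to kill (and your contradiction works: a maximal independent set containing such a $u$ meets $F$ in one vertex, misses $G_j$ entirely, and meets each other $G_i$ in at most one vertex, so has size at most $d-1$, contradicting unmixedness). But it is not the only obstruction: a vertex $u\in F$ adjacent to the free vertex $w_j$ but \emph{not} to all of $G_j$ already places $w_j$ in a second maximal clique of $G$ (the one containing $\{u,w_j\}$, which is neither $G_j$ nor a clique containing $G_j$), so $w_j$ ceases to be free in $G$ even though $G_j$ remains a facet of $\Delta(G)$. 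Your counting argument does not apply here, since a maximal independent set through $u$ may still pick up a vertex of $G_j$ non-adjacent to $u$. What must then be shown is that $G_j$ retains \emph{some} vertex that is free in $G$ --- equivalently, some $z\in G_j$ with $N_H[z]=G_j$ and no neighbour in $F$ --- and deducing this from unmixedness of $G$ is precisely the crux of $(3)\Rightarrow(4)$. You flag the ``bookkeeping'' as the main obstacle but do not resolve it, so the implication, and hence the theorem, is not actually proved.
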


\section{Main result}
\label{mainsec} The following definition is essential to understand
the underlined construction.
\begin{Definition}
Let $G$ be a graph on the vertex set $[n]$ and $A_1, \ldots, A_r$ be
a partition of $[n]$ into disjoint subsets such that $A_i$ is a
clique in $G$($A_i$ can be empty set). For each $i=1, \ldots, r$,
let $B_i=\{y_{i,1}, \ldots, y_{i,{s_i}} \}$ be a non-empty set.
Define the graph $G_{{A_1}, \ldots, {A_r}}^{{B_1}, \ldots, {B_r}}$
as follows:
\begin{equation}
G_{{A_1}, \ldots, {A_r}}^{{B_1}, \ldots, {B_r}} = G \cup \left(
\mathop{\cup}\limits_{i=1}^r \{F \subset A_i \cup B_i \colon \quad
|F|=2 \} \right).
\end{equation}
We call the graph $G_{{A_1}, \ldots, {A_r}}^{{B_1}, \ldots, {B_r}}$,
the hybrid graph of $G$ with respect to $B_1, \ldots, B_r$.
\end{Definition}
\begin{Remark}
Corresponding to each partition of the vertex set, we have
infinitely many choices to choose $B_i$'s thus there are infinitely
many hybrid graphs corresponding to any given graph.
\end{Remark}

Let $G$ be a graph and $G'= G_{{A_1}, \ldots, {A_r}}^{{B_1}, \ldots,
{B_r}}$ be the hybrid graph of $G$ with respect to $B_1, \ldots,
B_r$. Let $\Delta$ and $\Delta'$ be the independence complexes of
$G$ and $G'$ respectively. Let $$S=K[\{x_i \colon \; i=1, \ldots,
n\} \cup \{y_{i,j} \colon \; i=1, \ldots, r, \; j=1, \ldots,
s_i\}]$$ be the polynomial ring. Let us define the ordering on the
variables:
\begin{equation}
\label{ordering on x and y} x_1 > \cdots > x_n > y_{1,1}>\cdots
>y_{1,s_1} > \cdots > y_{r,1}> \cdots > y_{r,s_r}
\end{equation}
As the facets of $\Delta'$ are maximal independent sets in $G'$, it
is easy to see that the induced subgraph of $G'$ on  $A_i\cup B_i$
is a complete graph. Thus in an independent set, we can select at
most one element from $A_i \cup B_i$ for all $i$. In other words, if
$T$ be a facet of $\Delta'$, then $|T \cap (A_i \cup B_i)|=1$ for
all $i$. Let $F = T \cap [n]$, then $F$ will be an independent set
in $G$ and hence a face of $\Delta$. Let us consider $B =
\cup_{i=1}^ r B_i$ and  $F' = T \cap B$, then $T=F \cup F'$ and $F
\cap F'= \emptyset$. It is easy to note that $F' = \bigcup_{j: A_j
\cap F= \emptyset} \{y_{j,k_j}\}$ for some $1 \leq k_j \leq s_j$.
Let us record this simple observation in the following proposition.
\begin{Proposition}
\label{shape of facets} The independence complex $\Delta'$ of the
hybrid graph $G'$ is pure and every facet of $\Delta'$ is of the
form $F \cup F'$, where $F$ is a face of $\Delta$ and $F' =
\bigcup_{j: A_j \cap F= \emptyset} \{y_{j,k_j}\}$ for some $1 \leq
k_j \leq s_j$.

\end{Proposition}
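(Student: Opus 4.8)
The plan is to exploit the clique structure that the hybrid construction forces on each block $A_i \cup B_i$. First I would check that for every $i$ the induced subgraph of $G'$ on $A_i \cup B_i$ is a complete graph: the pairs inside $A_i$ are edges because $A_i$ is a clique of $G \subseteq G'$, and every remaining pair inside $A_i \cup B_i$ — that is, every pair meeting $B_i$ — is adjoined explicitly in the defining equation of $G'$. Consequently any independent set $T$ of $G'$ meets each block in at most one vertex, i.e.\ $|T \cap (A_i \cup B_i)| \le 1$ for all $i$.

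Next I would observe that since $A_1, \ldots, A_r$ partition $[n]$ and the sets $B_i$ consist of pairwise disjoint fresh vertices, the blocks $A_1 \cup B_1, \ldots, A_r \cup B_r$ partition the whole vertex set of $G'$. Hence $|T| = \sum_{i=1}^r |T \cap (A_i \cup B_i)| \le r$ for every independent set $T$. To pin down facets, suppose $T$ is a maximal independent set and $T \cap (A_i \cup B_i) = \emptyset$ for some $i$; then $T \cup \{y_{i,1}\}$ is still independent, because every $G'$-neighbour of the vertex $y_{i,1}$ lies in $A_i \cup B_i$, a block $T$ avoids — contradicting maximality. Therefore $|T \cap (A_i \cup B_i)| = 1$ for all $i$, so every facet has cardinality exactly $r$; this yields that $\Delta'$ is pure of dimension $r-1$.

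Finally, for the stated shape I would write $F = T \cap [n]$ and $F' = T \cap B$ with $B = \bigcup_{i=1}^r B_i$, so that $T = F \cup F'$ is a disjoint union and $F$, being independent in the subgraph $G$, is a face of $\Delta$. Fix $i$. Because $A_i \subseteq [n]$, we have $T \cap A_i = F \cap A_i$; so if $F \cap A_i \ne \emptyset$, the unique vertex of $T \cap (A_i \cup B_i)$ already lies in $A_i$, forcing $T \cap B_i = \emptyset$, whereas if $F \cap A_i = \emptyset$ that unique vertex must lie in $B_i$, giving $T \cap B_i = \{y_{i,k_i}\}$ for some $1 \le k_i \le s_i$. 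Taking the union over $i$ gives $F' = \bigcup_{j:\, A_j \cap F = \emptyset}\{y_{j,k_j}\}$, which is the assertion.

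The argument is essentially careful bookkeeping once the clique observation is available; the one place that needs a moment of attention is the maximality step, where it is essential to use that each new vertex $y_{i,1}$ has all of its $G'$-neighbours inside its own block $A_i \cup B_i$, so that an independent set disjoint from that block can always be enlarged.
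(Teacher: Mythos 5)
Your proof is correct and follows essentially the same route as the paper, which establishes the proposition in the discussion immediately preceding its statement: the induced subgraph of $G'$ on each block $A_i \cup B_i$ is complete, so an independent set meets each block at most once, and maximality forces exactly one vertex per block, from which the decomposition $T = F \cup F'$ follows. Your write-up is in fact slightly more careful than the paper's, since you explicitly verify the maximality step (that a maximal independent set missing a block could be enlarged by $y_{i,1}$, all of whose neighbours lie in that block), a point the paper treats as evident.
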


Note that there are $\prod _{j: A_j \cap F= \emptyset}|s_j|$ choices
for $F'$, thus corresponding to each face $F$ of $\Delta$, there is
a block of facets of $\Delta'$. Here, we explain this fact through
the following example.
\begin{Example}
Consider the graph $G$ with vertex set $V(G)=\{1,2,3,4\}$ and edge
set $E(G)=\{\{1,2\},\{1,3\},\{2,3\},\{2,4\},\{3,4\}\}$. Consider the
vertex partition as $V(G)=\{1\}\cup \{2\}\cup \{3\}\cup \{4\}$ and
take $B_1=\{5,6\}, B_2=\{7\}, B_3=\{8\}, B_4=\{9,10,11\}$ then
$G'=G_{{A_1}, \ldots, {A_4}}^{{B_1}, \ldots, {B_4}}$ will be,

\begin{center}
\psset{unit=1cm}
\begin{pspicture}(0,0)(7,3)
\pspolygon[style=fyp, fillcolor=white](0.5,1.5)(1.4,2)(1.4,1)
 \pspolygon[style=fyp, fillcolor=white](1.4,1)(1.4,2)(2.3,1.5)
  \rput(0.3,1.5){$1$}
 \rput(1.4,2.3){$3$}
 \rput(1.4,0.8){$2$}
 \rput(2.5,1.5){$4$}
 \rput(0.5,1.5){$\bullet$}
 \rput(1.4,2){$\bullet$}
 \rput(1.4,1){$\bullet$}
 \rput(2.3,1.5){$\bullet$}
\rput(1.5,0){$G$} \pspolygon[style=fyp,
fillcolor=white](3.8,1.5)(4.7,2)(4.7,1)
 \pspolygon[style=fyp, fillcolor=white](4.7,1)(4.7,2)(5.6,1.5)
 \psline(2.9,2)(3.8,1.5)
 \psline(4.7,1)(3.8,0.5)
 \psline(4.7,2)(3.8,2.5)
 \psline(2.9,1)(3.8,1.5)
 \psline(2.9,2)(2.9,1)
 \pspolygon(5.6,1.5)(6.5,2)(7.4,1.5)(6.5,1)
 \psline(6.5,2)(6.5,1)
 \psline(5.6,1.5)(7.4,1.5)

 \rput(3.8,1.8){$1$}
 \rput(4.9,2.2){$3$}
 \rput(4.9,0.8){$2$}
 \rput(5.6,1.9){$4$}
 \rput(2.9,2.3){$5$}
 \rput(2.9,0.8){$6$}
 \rput(3.8,0.2){$7$}
 \rput(3.8,2.8){$8$}
 \rput(6.5,2.3){$9$}
 \rput(6.5,0.8){$10$}
 \rput(7.7,1.5){$11$}

 \rput(3.8,1.5){$\bullet$}
 \rput(4.7,2){$\bullet$}
 \rput(4.7,1){$\bullet$}
 \rput(5.6,1.5){$\bullet$}
 \rput(2.9,2){$\bullet$}
 \rput(2.9,1){$\bullet$}
 \rput(3.8,2.5){$\bullet$}
 \rput(5.6,1.5){$\bullet$}
 \rput(6.5,2){$\bullet$}
 \rput(7.4,1.5){$\bullet$}
 \rput(6.5,1){$\bullet$}
 \rput(3.8,0.5){$\bullet$}
 \rput(5.9,0){$G'$}
\end{pspicture}
\end{center}
The facets of $\Delta'$ are shown in Table A,
\begin{table}[h!]
  \begin{center}
  \label{table1}
  \caption{A}
  \begin{tabular}{l|l}
  \textbf{Faces of $\Delta$} &  \textbf{Corresponding facets of $\Delta'$} \\
  \hline
  $\emptyset$ & $\{5,7,8,9\},\{5,7,8,10\},\{5,7,8,11\},\{6,7,8,9\},\{6,7,8,10\},\{6,7,8,11\}$ \\
  $\{1\}$ & $\{1\}\cup\{7,8,9\}, \{1\}\cup\{7,8,10\}, \{1\}\cup\{7,8,11\} $ \\
  $\{2\}$ & $\{2\}\cup\{5,8,9\}, \{2\}\cup\{5,8,10\}, \{2\}\cup\{5,8,11\}, \{2\}\cup\{6,8,9\}, \{2\}\cup\{6,8,10\}, \{2\}\cup\{6,8,11\}$ \\
  $\{3\}$ & $\{3\}\cup\{5,7,9\}, \{3\}\cup\{5,7,10\}, \{3\}\cup\{5,7,11\}, \{3\}\cup\{6,7,9\}, \{3\}\cup\{6,7,10\}, \{3\}\cup\{6,7,11\}$ \\
  $\{4\}$ & $\{4\}\cup\{5,7,8\}, \{4\}\cup\{6,7,8\}$ \\
  $\{1,4\}$ & $\{1,2\}\cup\{7,8\}$
  \end{tabular}
  \end{center}
\end{table}

If we consider another partition of vertex set as $V(G)=\{1,3\}\cup
\{2\} \cup \{4\}$ and $B_1=\{5\}, B_2=\{6\}, B_3=\{7,8,9\}$, then
$G'=G_{{A_1}, \ldots, {A_3}}^{{B_1}, \ldots, {B_3}}$ will be,
\begin{center}
\psset{unit=1cm}
\begin{pspicture}(0,0)(7,3)
\pspolygon[style=fyp, fillcolor=white](3.8,1.5)(4.7,2)(4.7,1)
 \pspolygon[style=fyp, fillcolor=white](4.7,1)(4.7,2)(5.6,1.5)
 \psline(3.8,1.5)(3.8,2.5)
 \psline(3.8,2.5)(4.7,2)
 \psline(3.8,0.5)(4.7,1)
 \pspolygon(5.6,1.5)(6.5,2)(7.4,1.5)(6.5,1)
 \psline(6.5,2)(6.5,1)
 \psline(5.6,1.5)(7.4,1.5)

 \rput(3.6,1.5){$1$}
 \rput(4.9,2.2){$3$}
 \rput(4.9,0.8){$2$}
 \rput(5.6,1.9){$4$}
 \rput(3.8,2.8){$5$}
 \rput(3.8,0.2){$6$}
 \rput(6.5,2.3){$7$}
 \rput(6.5,0.8){$8$}
 \rput(7.6,1.5){$9$}

 \rput(3.8,1.5){$\bullet$}
\rput(4.7,2){$\bullet$}
 \rput(4.7,1){$\bullet$}
 \rput(5.6,1.5){$\bullet$}
 \rput(3.8,2.5){$\bullet$}
 \rput(5.6,1.5){$\bullet$}
 \rput(6.5,2){$\bullet$}
 \rput(7.4,1.5){$\bullet$}
 \rput(6.5,1){$\bullet$}
 \rput(3.8,0.5){$\bullet$}
 \rput(5.9,0){$G'$}
\end{pspicture}
\end{center}
In this case, Table B describes the facets of $\Delta'$,
\begin{table}[h!]
  \begin{center}
  \label{table2}
  \caption{B}
  \begin{tabular}{l|l}
  \textbf{Faces of $\Delta$} &  \textbf{Corresponding facets of $\Delta'$} \\
  \hline
  $\emptyset$ & $\{5,6,7\},\{5,6,8\},\{5,6,9\}$ \\
  $\{1\}$ & $\{1\}\cup\{6,7\}, \{1\}\cup\{6,8\}, \{1\}\cup\{6,9\} $ \\
  $\{2\}$ & $\{2\}\cup\{5,7\}, \{2\}\cup\{5,8\}, \{2\}\cup\{5,9\}$ \\
  $\{3\}$ & $\{3\}\cup\{6,7\}, \{3\}\cup\{6,8\}, \{3\}\cup\{6,9\}$ \\
  $\{4\}$ & $\{4\}\cup\{5,6\}$ \\
  $\{1,4\}$ & $\{1,4\}\cup\{6\}$
  \end{tabular}
  \end{center}
\end{table}

 Lastly, assume the partition of vertex set as, $V(G)=\{1,2,3\} \cup \{4\}$ and $B_1=\{5\}, B_2=\{6,7\}$, then $G'=G_{{A_1}, \ldots, {A_2}}^{{B_1}, \ldots, {B_2}}$ will be,

\begin{center}
\psset{unit=1cm}
\begin{pspicture}(0,0)(7,3)
\pspolygon[style=fyp, fillcolor=white](3.8,1)(4.7,2)(4.7,1)
 \pspolygon[style=fyp, fillcolor=white](4.7,1)(4.7,2)(5.6,1.5)
 \psline(3.8,2)(3.8,1)
 \psline(3.8,2)(4.7,2)
 \psline(3.8,2)(4.7,1)
 \pspolygon(5.6,1.5)(6.5,2)(6.5,1)
 \psline(6.5,2)(6.5,1)

 \rput(3.6,0.8){$1$}
 \rput(4.9,2.2){$3$}
 \rput(4.9,0.8){$2$}
 \rput(5.6,1.9){$4$}
 \rput(3.6,2.2){$5$}
 \rput(6.7,2){$6$}
 \rput(6.7,0.8){$7$}

 \rput(3.8,1){$\bullet$}
\rput(4.7,2){$\bullet$}
 \rput(4.7,1){$\bullet$}
 \rput(5.6,1.5){$\bullet$}
 \rput(3.8,2){$\bullet$}
 \rput(5.6,1.5){$\bullet$}
 \rput(6.5,2){$\bullet$}
 \rput(6.5,1){$\bullet$}
 \rput(5.9,0){$G'$}
\end{pspicture}
\end{center}
See the table C for complete list of facets of $\Delta'$ in this
case.
\begin{table}[h!]
  \begin{center}
  \label{table3}
  \caption{C}
  \begin{tabular}{l|l}
  \textbf{Faces of $\Delta$} &  \textbf{Corresponding facets of $\Delta'$} \\
  \hline
  $\emptyset$ & $\{5,6\},\{5,7\}$ \\
  $\{1\}$ & $\{1\}\cup\{6\}, \{1\}\cup\{7\}$ \\
  $\{2\}$ & $\{2\}\cup\{6\}, \{2\}\cup\{7\}$ \\
  $\{3\}$ & $\{3\}\cup\{6\}, \{3\}\cup\{7\}$ \\
  $\{4\}$ & $\{4\}\cup\{5\}$ \\
  $\{1,4\}$ & $\{1,4\}\cup\emptyset$
  \end{tabular}
  \end{center}
\end{table}
\end{Example}
Now, we present the main result of this paper.
\begin{Theorem}
\label{thm-cohen-macaulay-graphs} Let $G$ be a graph on the vertex
set $[n]$ and $A_1, \ldots, A_r$ be a partition of $[n]$ into
disjoint subsets such that $A_i$ is a clique in $G$. Let
$B_i=\{y_{i,1}, \ldots, y_{i,{s_i}} \}$ and $G'=G_{{A_1}, \ldots,
{A_r}}^{{B_1}, \ldots, {B_r}}$ be a hybrid graph of $G$ with respect
to $B_1, \ldots, B_r$. Let $\Delta'$ be the independence complex of
$G'$ and $S=K[\{x_i \colon \; i=1, \ldots, n\} \cup \{y_{i,j} \colon
\; i=1, \ldots, r, \; j=1, \ldots, s_i\}]$ be the polynomial ring.
Then,
\begin{itemize}
\item[\rm (i)] $\Delta'$ is pure shellable of dimension $r-1$.
\item[\rm (ii)] The ring $S\slash{I \left( G' \right)}$ is Cohen-Macaulay of dimension $r$.
\end{itemize}
\end{Theorem}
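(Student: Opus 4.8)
The plan is to prove (i) directly and then read off (ii). For purity and for the dimension I would invoke Proposition~\ref{shape of facets} together with the discussion surrounding it: the facets of $\Delta'$ are exactly the sets $T = F\cup F'$ with $F\in\Delta$ and $F' = \bigcup_{j\in J_F}\{y_{j,k_j}\}$, where $J_F = \{j : A_j\cap F = \emptyset\}$ and $1\le k_j\le s_j$; call such an $F'$ a transversal of $\{B_j : j\in J_F\}$. Since $A_1,\dots,A_r$ partition $[n]$ and each $A_i$ is a clique of $G$, an independent set $F$ of $G$ meets each $A_i$ in at most one vertex, so $|F| = \#\{i : A_i\cap F\ne\emptyset\} = r - |J_F|$; hence $|T| = |F| + |J_F| = r$ for every facet $T$ of $\Delta'$. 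Thus $\Delta'$ is pure of dimension $r-1$, and accordingly $\dim S/I(G') = \dim S/I_{\Delta'} = \dim\Delta' + 1 = r$.

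It remains to exhibit a shelling order of $\Delta'$; once this is done, (i) is complete and (ii) follows from Theorem~\ref{shellable-is-cm}. I would fix a total order on the faces of $\Delta$ that is nondecreasing in cardinality, order the vertices of each $B_j$ as $y_{j,1}<\dots<y_{j,s_j}$, and order transversals lexicographically, reading the blocks of $J_F$ in increasing order. Then I order the facets of $\Delta'$ first by the chosen order of their underlying face $F$, and, among the facets $F\cup F'$ sharing a given $F$, by the lexicographic order of $F'$. To show this is a shelling I would use the criterion stated right after Definition~\ref{definition of shellability}: given a facet $T = F\cup F'$ and a facet $T'' = \tilde F\cup\tilde F'$ preceding $T$, I must produce a vertex $v\in T\setminus T''$ and a facet $T_k$ preceding $T$ with $T\setminus T_k = \{v\}$.

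I split into two cases. If $\tilde F = F$, then $T''$ preceding $T$ means $\tilde F'$ precedes $F'$ lexicographically; let $j^*$ be the first block of $J_F$ at which they differ, with $B_{j^*}$-component $y_{j^*,k}$ of $F'$ and $y_{j^*,k'}$ of $\tilde F'$, so $k'<k$. I put $v = y_{j^*,k}$ (which lies in $T$ but not in $T''$) and let $T_k$ be the facet with underlying face $F$ whose transversal is $F'$ with $y_{j^*,k}$ replaced by $y_{j^*,k'}$; then $T_k$ precedes $T$ and $T\setminus T_k = \{v\}$. If $\tilde F\ne F$, then, since $T''$ precedes $T$, the order forces $|\tilde F|\le|F|$, and combined with $\tilde F\ne F$ this gives $F\not\subseteq\tilde F$; pick $a\in F\setminus\tilde F$ and let $i$ be the index with $a\in A_i$, so $F\cap A_i = \{a\}$ and $i\notin J_F$. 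Then $v := a$ lies in $T\setminus T''$ (indeed $a\notin\tilde F$, and $a\notin\tilde F'$ since $\tilde F'$ consists of some of the new vertices $y_{j,l}$ while $a\in[n]$). Set $T_k = (F\setminus\{a\})\cup F'\cup\{y_{i,1}\}$. Since $F\setminus\{a\}\in\Delta$, since $J_{F\setminus\{a\}} = J_F\cup\{i\}$, and since $F'\cup\{y_{i,1}\}$ is a transversal of $\{B_j : j\in J_{F\setminus\{a\}}\}$, the set $T_k$ is a facet of $\Delta'$; its underlying face $F\setminus\{a\}$ has cardinality $|F|-1$, so $T_k$ precedes $T$, and $T\setminus T_k = \{a\} = \{v\}$. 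This checks the criterion in all cases, so $\Delta'$ is shellable, hence pure shellable of dimension $r-1$, which proves (i); then Theorem~\ref{shellable-is-cm} shows $S/I(G')$ is Cohen--Macaulay, of Krull dimension $r$, proving (ii).

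The step I expect to be the genuine obstacle is identifying the correct global order on the faces of $\Delta$: it has to go from smaller faces to larger ones, precisely so that the ``descent'' facet $T_k$ manufactured in the second case --- obtained by deleting a vertex $a$ from $F$ and compensating with the first new vertex $y_{i,1}$ of the block it frees up --- is guaranteed to have appeared earlier in the list. It is also worth bearing in mind that $\Delta$ itself need not be pure, so ``nondecreasing cardinality'' is just an auxiliary linear order on its faces, with no shellability content of its own; the shellability only materializes after passing to the enlarged complex $\Delta'$ produced by the hybrid construction.
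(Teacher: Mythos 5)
Your proposal is correct and follows essentially the same route as the paper: the identical shelling order (facets grouped by their underlying face $F\in\Delta$, faces ordered by nondecreasing cardinality, transversals ordered lexicographically) and the same two-case check of the shelling criterion. The only differences are cosmetic improvements --- your same-block case replaces the offending coordinate in a single step where the paper iterates, and you explicitly justify that $F\setminus\tilde F\neq\emptyset$ via $|\tilde F|\le |F|$, a point the paper glosses over.
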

\begin{proof}
Lemma~\ref{shape of facets} guarantees that the independence complex
$\Delta'$ of $G'$ is pure and has dimension $r-1$, thus it is
sufficient to show that $\Delta'$ is shellable. From above
discussion, we know that corresponding to every face of $\Delta$,
there is a block of facets of $\Delta'$. Now we define an order the
facets of $\Delta'$ to show that $\Delta'$ is shellable.
\begin{quote}
We order the faces of $\Delta'$ in terms of increasing dimensions.
If two faces have same dimension, we order them by \eqref{ordering
on x and y}. Thus, associated to each face $F$ of $\Delta'$, we
consider the block associated to $F$, ordered as in \eqref{ordering
on x and y}.
\end{quote}
Let us assume $S$ and $T$ be two distinct facets of $\Delta'$ with $S < T$, here arises two cases:\\
\textbf{Case:1. When $S$ and $T$ belongs to different blocks} We can
write $S= F \cup F'$ and $T = G \cup G'$ where $F, G$ are different
faces of $\Delta$ and $F' = \bigcup_{j: A_j \cap F= \emptyset}
\{y_{j,k_j}\}$ for some $1 \leq k_j \leq s_j$, $G' = \bigcup_{j: A_j
\cap G= \emptyset} \{y_{j,p_j}\}$ for some $1 \leq p_j \leq s_j$.
\\ As $F \neq G$, we must have some $x_t \in G \setminus F$. Let $G_1 = G \setminus \{x_t\}$, then $G_1$ will also be a face of $\Delta$. As $G_1 \subset G$ so \[
\{j: A_j \cap G = \emptyset\} \subset \{j: A_j \cap G_1 =
\emptyset\}
\]
thus $G'\subset G_1':=\bigcup_{j: A_j \cap G= \emptyset} \{y_{j,p_j}\}$ for some $1 \leq p_j \leq s_j$. In fact, if $x_t \in A_p$ then $G_1' = G' \cup \{y_{p, k_p}\}$ for some $1 \leq k_p \leq s_p$. Let us take $T_1 = G_1 \cup G_1'$, then $T_1 < T$ with $T \setminus T_1 = \{x_t\}$.\\
\textbf{Case:2. When $S$ and $T$ belongs to same block} In this
case, $S = F \cup F'$ and $T = F \cup F''$. As $S \neq T$, we have
$F' \neq F''$ and $T \setminus S \neq \emptyset$. Let $l$ be the
least number, such that $y_{l, k_l} \in F'' \setminus F'$, thus
$y_{l, k_l} \in F''$ and $y_{l, k_l} \notin F'$ which further
implies the existence of a $y_{l, k_l '} \in F'$ for some  $1 \leq
k_l' \neq k_l \leq s_l$ with $y_{l, k_l '} < y_{l, k_l}$ as $S < T$.

If $y_{l, k_l}$ is the only vertex in $F'' \setminus F'$, we are
done, otherwise suppose $y_{m, k_m} \in F'' \setminus F'$. We have
ordered $F'$ and $F''$ as defined in \eqref{ordering on x and y} and
as we have assumed $l$ to be least such number, thus the first $l-1$
components in $F'$ and $F''$  will be the same. Thus $F'$ and $F''$
will be of the form,
\[
F' = \{ \ldots , y_{l, k_l '}, \ldots, y_{m,k_m '}, \ldots \}
\]
\[
F''= \{ \ldots , y_{l, k_l}, \ldots, y_{m,k_m}, \ldots \}
\]
where $y_{m,k_m '} \neq y_{m,k_m }$. Let us consider,
\[
F'''=\{ \ldots , y_{l, k_l '}, \ldots, y_{m,k_m}, \ldots \}
\]
and suppose $T_1 = F \cup F'''$, then $T_1$ will be a facet of
$\Delta'$ by Lemma~\ref{shape of facets} with $T_1 < T$. Note that
$y_{m,k_m} \notin F'' \setminus F'''$ and $y_{l,k_l} \in F''
\setminus F'''$, thus $y_{m,k_m} \notin T\setminus T_1$ and
$y_{l,k_l} \in T\setminus T_1$. If $y_{l, k_l}$ is the only element
in $T \setminus T_1$, we are done, otherwise we shall repeat the
same process. As we have finite element, this process will terminate
in finite steps and we shall have a $T_i < T$ such that $T \setminus
T_i =\{y_{l,k_l}\}$, as required.

\end{proof}
Here, we give the descriptive definition of a hybrid graph.
\begin{Definition}
A graph $G$ is said to be {\bf hybrid} if there exists some graph
$H$ such that $G$ is a hybrid graph associated to $H$.
\end{Definition}

  The following result shows that the whiskering of a graph, given by Villarreal in \cite{V} is a particular case of our construction.
\begin{Corollary}[Villareal]
\label{whisker} Suppose $G$ is a graph and let $G'$ be the graph
obtained by adding a whisker at every vertex $v \in G$. Then the
ideal $I(G')$ is Cohen--Macaulay.
\end{Corollary}
\begin{proof}
Suppose $V(G)=\{v_1, \ldots, v_n\}$ and consider the trivial clique
partition of $V(G)$ into singleton sets as, $V(G)=\{v_1\} \cup
\ldots \cup \{v_n\}$. If we take $B_i =\{y_i\}$ for all $1 \leq i
\leq n$, then $G' = G_{{A_1}, \ldots, {A_n}}^{{B_1}, \ldots,
{B_n}}$, thus $\Delta'$ is pure and $I(G')$ is Cohen--Macaulay by
Theorem~\ref{thm-cohen-macaulay-graphs}.
\end{proof}
R. Woodroof \cite{W} and  D. Cook and U. Nagel \cite{CN} generalized
the the concept of whiskering, and defined the terms
\textit{clique-whiskering} and \textit{fully clique-whiskering}
respectively. Recall from \cite{CN} that a {\em vertex
clique-partition} $\pi$ of a graph $G$ is a partition $\pi = \{W_1,
\ldots, W_t\}$ of $V(G)$ such that each subgraph induced on $W_i$ is
a nonempty clique, see \cite{W}. The clique-whiskering is particular
case of our construction.

\begin{Corollary}[Theorem 3.3, \cite{CN}]
\label{clique whiskering} Let $\pi=\{W_1, \ldots, W_t\}$ be a vertex
clique-partition of $G$. Then $I(G^{\pi})$ is Cohen--Macaulay.
\end{Corollary}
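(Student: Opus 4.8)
The plan is to realize the clique-whiskered graph $G^{\pi}$ itself as a hybrid graph of $G$ and then simply invoke Theorem~\ref{thm-cohen-macaulay-graphs}(ii). Recall that for a vertex clique-partition $\pi=\{W_1,\dots,W_t\}$ the graph $G^{\pi}$ is obtained from $G$ by adjoining, for each block $W_i$, one new vertex $w_i$ together with every edge $\{v,w_i\}$ with $v\in W_i$, the new vertices $w_1,\dots,w_t$ forming an independent set. So I would take $r=t$, set $A_i=W_i$ for $i=1,\dots,t$ (this is exactly a partition of $[n]$ into subsets each of which is a clique of $G$, as required in the definition of a hybrid graph), and set $B_i=\{w_i\}$, so that $s_i=1$ for every $i$.

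Next I would verify the identification $G_{A_1,\dots,A_t}^{B_1,\dots,B_t}=G^{\pi}$. Unwinding the definition of the hybrid graph, $G_{A_1,\dots,A_t}^{B_1,\dots,B_t}$ is obtained from $G$ by adjoining all two-element subsets of $A_i\cup B_i$, for each $i$. Since $A_i=W_i$ is already a clique of $G$, the two-element subsets contained in $A_i$ contribute nothing new; since $|B_i|=1$, there is no edge internal to $B_i$; hence the only genuinely new edges are the $\{v,w_i\}$ with $v\in A_i=W_i$. These are precisely the edges added in forming $G^{\pi}$, so the two graphs coincide and $G^{\pi}$ is a hybrid graph of $G$ with respect to $B_1,\dots,B_t$.

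It then remains only to apply Theorem~\ref{thm-cohen-macaulay-graphs} with these choices: the independence complex of $G^{\pi}$ is pure shellable of dimension $t-1$, and therefore the ring $S/I(G^{\pi})$ is Cohen--Macaulay (of Krull dimension $t$), which is the assertion. The only point calling for any care — and the place a careful reader might pause — is the bookkeeping in the identification $G_{A_1,\dots,A_t}^{B_1,\dots,B_t}=G^{\pi}$: one must be sure that the hybrid construction introduces no spurious edges among or into the $B_i$'s, and that the hypothesis ``$A_i$ a clique'' is genuinely satisfied. Both are immediate from the definition of a vertex clique-partition and from the explicit edge set in the definition of a hybrid graph, so there is no real obstacle; all the substance of the corollary is carried by the main theorem.
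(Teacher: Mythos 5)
Your proposal is correct and follows exactly the paper's own route: take $A_i=W_i$, $B_i$ a singleton, observe that the resulting hybrid graph is $G^{\pi}$, and invoke Theorem~\ref{thm-cohen-macaulay-graphs}. Your verification that no spurious edges arise is a welcome extra detail the paper leaves implicit, but the argument is the same.
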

\begin{proof}
If $\pi=\{W_1, \ldots, W_t\}$ be a clique vertex partition of $G$.
Let us take $B_i=\{y_i\}$, singleton sets for all $i$. Then
$G^{\pi}=H_{{W_1}, \ldots, {W_t}}^{{B_1}, \ldots, {B_t}}$ is a
particular hybrid graph associated to $G$ and hence Cohen--Macaulay
by Theorem~\ref{thm-cohen-macaulay-graphs}.
\end{proof}
Herzog {\em et al.} \cite{HHZ} characterize Cohen--Macaulay chordal
graphs. One can see easily that every Cohen--Macaulay chordal graph
is in fact a hybrid graph associate to some graph. Thus, it
characterizes all Cohen--Macaulay chordal graphs. By a free vertex
in a simplicial complex, we mean a vertex which belongs to exactly
on facet of the simplcial complex.
\begin{Corollary}
\label{HHZcor} Let $K$ be a field and $G$ be a chordal graph  on the
vertex set $[n]$. Let $F_1, \ldots, F_m$ be the facets of
$\Delta(G)$ which admit a free vertex. Then the following are
equivalent:
\begin{enumerate}
  \item $G$ is Cohen--Macaulay;
  \item $G$ is Cohen--Macaulay over $K$;
  \item $G$ is unmixed;
  \item $[n]$ is the disjoint union of $F_1, \ldots, F_m$.
  \item $G$ is a hybrid graph.
\end{enumerate}
\end{Corollary}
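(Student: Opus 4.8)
The plan is to graft condition~(5) onto the chain of equivalences already supplied by Theorem~\ref{HHZ}, which gives $(1)\Leftrightarrow(2)\Leftrightarrow(3)\Leftrightarrow(4)$; it therefore suffices to prove $(5)\Rightarrow(1)$ and $(4)\Rightarrow(5)$. The first of these is immediate from the main theorem: if $G$ is a hybrid graph, say $G = H_{A_1,\ldots,A_r}^{B_1,\ldots,B_r}$ for some graph $H$ with clique partition $A_1,\ldots,A_r$ of $V(H)$, then Theorem~\ref{thm-cohen-macaulay-graphs}(ii), applied with $H$ in the role of the base graph, yields that $S/I(G)$ is Cohen--Macaulay, i.e.\ $G$ is Cohen--Macaulay.

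The content lies in $(4)\Rightarrow(5)$, and the idea is to recover $G$ itself as a hybrid graph by splitting each relevant facet into its non-free part and its free vertices. Assume $[n]=F_1\sqcup\cdots\sqcup F_m$, where $F_1,\ldots,F_m$ are the facets of $\Delta(G)$ admitting a free vertex. For each $i$, let $B_i$ be the (non-empty) set of vertices of $\Delta(G)$ that are free and lie in $F_i$, put $A_i:=F_i\setminus B_i$ (possibly empty), and let $H:=G_A$ be the induced subgraph of $G$ on $A:=\bigcup_{i=1}^m A_i$. Then $A_1,\ldots,A_m$ is a clique partition of $V(H)$: the $A_i$ are pairwise disjoint and cover $A$ because the $F_i$ are, and each $A_i\subseteq F_i$ is a clique of $G$ lying in $A$, hence a clique of $H$. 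What remains is the identity $G = H_{A_1,\ldots,A_m}^{B_1,\ldots,B_m}$, that is, $E(G)=E(H)\cup\bigcup_{i=1}^m\{\,e\subseteq A_i\cup B_i \colon |e|=2\,\}$. The inclusion $\supseteq$ is clear, since $H$ is an induced subgraph of $G$ and each $A_i\cup B_i=F_i$ is a clique of $G$. For $\subseteq$, take $\{u,v\}\in E(G)$; since it is a clique, $u\in F_i$ and $v\in F_j$ for unique indices $i,j$, and either $i=j$, in which case $\{u,v\}\subseteq F_i=A_i\cup B_i$, or $i\neq j$, in which case I claim that $u,v\in A$ and hence $\{u,v\}\in E(H)$.

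The crux — and the only step that is more than bookkeeping — is precisely this last claim: if, say, $u$ were a free vertex of $\Delta(G)$, it would belong to a unique facet, necessarily $F_i$; but $\{u,v\}$ is a clique, so it lies in some facet, which must then be $F_i$, forcing $v\in F_i$ and contradicting $v\in F_j$ with $i\neq j$ since the $F$'s are disjoint. Hence neither $u$ nor $v$ is free, both lie in $A$, and $\{u,v\}\in E(H)$; this completes the verification of the identity and shows $G$ is a hybrid graph associated to $H$. (The degenerate cases in which some, or even all, of the $A_i$ are empty cause no trouble: the definition of a hybrid graph allows empty $A_i$, while each $B_i$ is automatically non-empty because $F_i$ admits a free vertex.) This step uses hypothesis~(4), the disjointness of the $F_i$, together with the defining property of a free vertex — membership in a unique facet of $\Delta(G)$ — and that combination is exactly what makes the new equivalence work; it is the one place in the argument that requires genuine thought, the rest being formal.
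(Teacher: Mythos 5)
Your proof is correct and follows essentially the same route as the paper: $(5)\Rightarrow(1)$ from Theorem~\ref{thm-cohen-macaulay-graphs}, and $(4)\Rightarrow(5)$ by taking $A_i$ and $B_i$ to be the non-free and free vertices of $F_i$ and $H=G_A$. You additionally spell out the verification that $G=H_{A_1,\ldots,A_m}^{B_1,\ldots,B_m}$ (in particular that an edge meeting two different $F_i$'s has both endpoints non-free), which the paper asserts without proof.
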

\begin{proof}
$(4) \Rightarrow (5)$ As,
\[
[n]= F_1 \cup \ldots \cup F_m
\]
where $F_i$ are cliques of $\Delta(G)$ containing free vertices. Let $A_i$ and $B_i$ denote the non-free and free vertices of $F_i$ respectively. Let $A = \cup_{i=1} ^ r A_i$ and $H := G_A $ be the induced graph. Then $G = H_{{A_1}, \ldots, {A_m}}^{{B_1}, \ldots, {B_m}}$.\\
$(5) \Rightarrow (1)$ Follows from
Theorem~\ref{thm-cohen-macaulay-graphs}.
\end{proof}
In particular, one obtains
\begin{Corollary}[Corollary 6.3.5, \cite{V1}]
If $G$ is a tree then the following are equivalent:
\begin{enumerate}
  \item $G$ is Cohen--Macaulay.
  \item $G$ is unmixed.
  \item $G$ is a hybrid graph.
\end{enumerate}
\end{Corollary}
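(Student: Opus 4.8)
The plan is to deduce this corollary as an immediate specialization of Corollary~\ref{HHZcor}. The only structural fact needed is that every tree is a chordal graph (a tree has no cycles at all, so vacuously every cycle of length $>3$ has a chord), which is already observed in the Preliminaries. Hence Corollary~\ref{HHZcor} applies verbatim to $G$, and among its five equivalent conditions we have in particular that ``$G$ is Cohen--Macaulay'' $\Leftrightarrow$ ``$G$ is unmixed'' $\Leftrightarrow$ ``$G$ is a hybrid graph.'' These are exactly conditions (1), (2), (3) of the present statement, so the equivalence follows with no further work.

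If one prefers a self-contained argument that exhibits the hybrid structure concretely, I would instead argue as follows. For a tree $G$ the clique complex $\Delta(G)$ has as its facets precisely the edges of $G$ together with any isolated vertices, and a free vertex of $\Delta(G)$ in the sense defined above is simply a leaf of $G$ (or an isolated vertex). So the facets $F_1,\dots,F_m$ admitting a free vertex are the edges incident to a leaf. For $(2)\Rightarrow(3)$ one invokes Theorem~\ref{HHZ} (valid since $G$ is chordal) to pass from unmixedness to the statement that $[n]=F_1\cup\cdots\cup F_m$ is a disjoint union; then, writing $B_i$ for the free (leaf) vertices of $F_i$ and $A_i=F_i\setminus B_i$ for the remaining vertex, and setting $H:=G_{A}$ with $A=\bigcup_i A_i$, one checks directly from the definition of the hybrid construction that $G=H_{A_1,\dots,A_m}^{B_1,\dots,B_m}$, because every edge of $G$ is either an edge of $H$ or joins a leaf to its unique neighbour, i.e. lies in some $A_i\cup B_i$. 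For $(3)\Rightarrow(1)$ one quotes Theorem~\ref{thm-cohen-macaulay-graphs}, and $(1)\Rightarrow(2)$ is the standard fact (recalled in the Preliminaries) that Cohen--Macaulay graphs are unmixed.

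There is essentially no genuine obstacle here: the content has already been carried out in Corollary~\ref{HHZcor}. The only point requiring a line of care is the translation of the chordal-graph bookkeeping (facets of $\Delta(G)$ with a free vertex, and their disjointness) into the tree language of leaves and their neighbours, and the verification that the resulting $A_i$, $B_i$, and induced graph $H$ reproduce $G$ under the hybrid construction; this is routine once one notes that in a tree each leaf has a unique neighbour, so the $A_i$ are forced to be singletons and the sets $A_i\cup B_i$ partition off exactly the leaf-edges. I would therefore present the short proof via Corollary~\ref{HHZcor} as the main argument and relegate the explicit description to a remark.
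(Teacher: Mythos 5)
Your proposal is correct and matches the paper, which offers no separate proof at all: the corollary is introduced by ``In particular, one obtains,'' i.e.\ exactly your main argument of specializing Corollary~\ref{HHZcor} to trees, which are chordal. Your supplementary explicit description of the hybrid structure via leaves is a fine (and accurate) elaboration, but it is not needed and does not change the route.
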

\begin{Remark}
In small graphs, diagrammatically it is quite easy to check whether
a given graph is hybrid or not. It is pertinent to mention that
there exist graphs that are not hybrid but still Cohen--Macaulay.
For example, $5-$cycle is not hybrid but Cohen--Macaulay.
\end{Remark}
\section*{Acknowledgement}
We would like to thank the Higher Education Commission, Pakistan and
Abdus Salam School of Mathematical Sciences, Lahore Pakistan for
supporting and facilitating this research. We would also like to
thank Ali Akbar Yazdan Pour for valuable remarks during his visit to
the Abdus Salam School of Mathematical Sciences and for proposing
the name "hybrid graph".

\end{document}